\theoremstyle{plain}
\newtheorem{theorem}{Theorem}[section]
\newtheorem{lemma}[theorem]{Lemma}
\newtheorem{prop}[theorem]{Proposition}
\theoremstyle{remark}
\newtheorem{remark}[theorem]{Remark}
\newtheorem*{note*}{Note}
\newtheorem*{remark*}{Remark}
\newtheorem*{example*}{Example}
\theoremstyle{definition}
\newtheorem*{definition*}{Definition}
\newtheorem*{hypothesis*}{Hypothesis}
\newtheorem*{assumptions*}{Assumptions}
\newtheorem{definition}[theorem]{Definition}
\DeclareMathOperator{\Gal}{Gal}
\DeclareMathOperator{\aug}{aug}
\DeclareMathOperator{\Hom}{Hom}
\DeclareMathOperator{\Map}{Map}
\DeclareMathOperator{\Cone}{Cone}
\DeclareMathOperator{\ctp}{\widehat{\otimes}}
\DeclareMathOperator{\cd}{cd}
\def\ram{{\rm\mathop{ram}}}
\def\Ind{{\rm\mathop{Ind}}}
\newcommand*{\ZZ}{\mathbb{Z}}
\newcommand*{\QQ}{\mathbb{Q}}
\newcommand*{\LL}{\mathbb{L}}
\numberwithin{equation}{section}
\title{On natural complexes in Iwasawa theory}
\author{Antonio Mejías Gil}
\address{
	Universit\"{a}t Duisburg-Essen\\
	Fakult\"{a}t f\"{u}r Mathematik\\
	Thea-Leymann-Str. 9\\
	45127 Essen\\
	Germany}
\email{anmegi.95@gmail.com}
\author{Andreas Nickel}
\address{
	Universit\"{a}t der Bundeswehr M\"unchen\\
	Institut f\"ur Theoretische Informatik, Mathematik und Operations Research\\
	Werner-Heisenberg-Weg 39\\
	85579 Neubiberg\\
	Germany}
\email{andreas.nickel@unibw.de}
\urladdr{https://www.unibw.de/timor/mitarbeiter/univ-prof-dr-andreas-nickel}
\subjclass[2020]{11R23}
\keywords{Iwasawa theory, perfect complexes}
\date{Version of 1st February 2024}
\begin{document}

    \maketitle
    
    \begin{abstract}
    	We consider two natural complexes that appear in recent formulations of equivariant Iwasawa main conjectures
    	for extensions of not necessarily totally real fields. We show that both complexes are isomorphic in the
    	derived category of Iwasawa modules if the weak Leopoldt conjecture holds for the relevant
    	$\ZZ_p$-extension.
    \end{abstract}

    \section*{Introduction}
    
        The study of class numbers of number fields constitutes one of the central aims of modern number theory, motivated by results such as Kummer's proof of Fermat's last theorem in the case of regular primes. One of the most fruitful approaches to this pursuit has been the combination of arithmetic and analytic techniques, exemplified by the celebrated analytic class number formula
        \[
            \lim_{s \to 1} \ (s - 1) \zeta_K(s) = \frac{2^{r_1} (2\pi)^{r_2} h_K R_K}{w_K \sqrt{|d_K|}}.
        \]
        Here, the left-hand side belongs to the realm of analytic number theory, whereas the right-hand one consists of primarily algebraic invariants -- most notably, the elusive class number $h_K$ itself. The so-called equivariant Tamagawa Number Conjecture (eTNC) of Burns and Flach \cite{bf_etnc} vastly generalises this algebraic-analytic connection to the setting of \textit{motives}, and therefore also encompasses the deep Birch and Swinnerton-Dyer conjecture on ranks of elliptic curves.
        
        In the 1960s, Iwasawa pioneered an approach to the study of such relations in the case of class groups consisting of the consideration of the \textit{asymptotic} behaviour of class numbers along infinite towers, rather than at a fixed \textit{finite level} (i.e. a single number field) as before. His original \textit{Main Conjecture} concerned class-group behaviour along the  cyclotomic $\ZZ_p$-extension of abelian extensions of $\QQ$, a setting which was quickly extended to an arbitrary totally real base field $K$. The Kubota-Leopoldt $p$-adic $L$-function, and its generalisation independently constructed by Pierrette Cassou-Nogu\`es \cite{cassou-nogues}, Deligne and Ribet \cite{deligne_ribet} and Barsky \cite{barsky}, play the analytic role in the conjecture. The first major breakthrough in this space came in 1984, when Mazur and Wiles \cite{mazur_wiles} settled the (character-wise) abelian case with base field $K = \QQ$. Wiles \cite{wiles} then extended these techniques and gave a proof in the general totally real case six years later.
        
        In the last two decades, the focus has been put on \textit{equivariant} Main Conjectures, where one predicts an algebraic-analytic relation that simultaneously encompasses all characters of the Galois group under consideration, rather than postulating individual \textit{character-wise} relations as in the original work of Iwasawa. One of the first such examples is Burns and Greither's equivariant conjecture from \cite{burns_greither}, which they proved for abelian extensions of $\QQ$ and used to deduce corresponding cases of the eTNC. In 2004, Ritter and Weiss \cite{rwii} formulated an equivariant Main Conjecture for arbitrary totally real number fields with no abelianity requirement, which they proved in the so-called $\mu = 0$ case in a series of articles culminating in \cite{rw_on_the}. Independently, Kakde \cite{kakde_mc} formulated and proved a Main Conjecture in the same setting which additionally allowed for $\ZZ_p$-extensions of higher rank. Venjakob \cite{venjakob_on_rw} and the second named author \cite{nickel_plms} independently proved Kakde's conjecture to be equivalent to that of Ritter and Weiss
        in the rank $1$ case. Another important family of Main Conjectures concerns imaginary quadratic base fields, an area initiated by Rubin which has seen important cases being settled by Rubin himself \cite{MR0952288, MR1079839}, Bley \cite{bley} and, more recently, Bullach and Hofer \cite{bullach_hofer}.
        
        The Main Conjecture of Ritter and Weiss and of Kakde is formulated in the language of algebraic $K$-theory and contains as its central algebraic object a certain \textit{perfect complex} which defines a class in the relevant relative $K$-group. However, the non-vanishing of the rank of the classical $S$-ramified Iwasawa module $X_S$ when the base field $K$ is not totally real, renders this approach unsuitable for direct generalisation. A different machinery was employed by Burns, Kurihara and Sano \cite{bks} in 2017, when they postulated a Main Conjecture for \textit{abelian} extensions of arbitrary number fields using the language of determinant functors and exterior powers. The algebraic complex of Ritter and Weiss is replaced by a different one such that the positive rank of $X_S$ is no longer an obstacle. Until recently, exterior powers were only defined for commutative rings, which complicated generalisation of the latter conjecture. The first named author's PhD thesis \cite{thesis}, under the supervision of the second named author, formulated a new Main Conjecture which, in a suitable sense, generalises these two conjectures to remove both the total-realness and abelianity assumptions. It is important to note that Burns and Sano \cite{burns_sano} independently extended their and Kurihara's conjecture to the non-abelian case by introducing so-called \textit{reduced exterior powers} -- which provide a non-commutative analogue of classical exterior powers. The exact connection between this conjecture and that of \cite{thesis} is unclear as of yet, but it stands to reason that both are largely equivalent, inasmuch as they are both natural  generalisations of \cite{bks} to the non-abelian setting.
        
        In this article, we present and generalise the construction of the complex featured in the conjecture formulated in \cite{thesis}, which largely relies on explicit maps between classical Iwasawa modules; and then prove that the resulting complex coincides with that used in the conjecture of Burns, Kurihara and Sano whenever the weak Leopoldt conjecture holds for the relevant $\ZZ_p$-extension. Most notably, this is the case whenever the latter is the cyclotomic one. Coincidence here is to be understood as isomorphism in the natural derived category. A full comparison of the two conjectures can be found in \cite[\S 4.1]{thesis}. 
        Another thing we won't cover here is the relation to the complex of Ritter and Weiss. This, as well as the relation between the corresponding conjectures, is explained in detail in \cite[\S 4.2]{thesis}.
        
        \subsection*{Acknowledgements}
        The authors wish to express their gratitude towards Daniel Macías Castillo and Dominik Bullach for the organisation of the excellent conference in Madrid. The first named author would also like to thank the University of Duisburg-Essen, and in particular the Essen Seminar for Algebraic Geometry and Arithmetic, for the stipend that enabled the doctoral project from which this article stems. Finally, the authors are indebted to a number of people for many fruitful mathematical conversations, among which we would like to single out Werner Bley, Dominik Bullach, Bence Forrás, Sören Kleine, Alexandre Maksoud, Takamichi Sano and Otmar Venjakob.

    \section{Construction of the main complex}
           
%
        \subsection{Setup and notation}
        Let $p$ be a prime and let
        $G$ be a profinite group.
        The complete group algebra of $G$ over $\ZZ_p$ is 
        \[
        \Lambda(G) := \ZZ_{p}\llbracket G \rrbracket = \varprojlim \ZZ_{p}[G/N],
        \]
        where the inverse limit is taken over all open normal subgroups 
        $N$ of $G$. Then $\Lambda(G)$ 
        is a compact $\ZZ_p$-algebra and we denote
        the kernel of the natural augmentation map
        $\aug \colon \Lambda(G) \twoheadrightarrow \ZZ_p$ by $\Delta(G)$.
        If $M$ is a (left) $\Lambda(G)$-module we let
        $M_{G} := M / \Delta(G) M$ be the module of coinvariants
        of $M$. This is the maximal quotient module of $M$ with trivial
        $G$-action. 
        Similarly, we denote the maximal submodule of $M$ upon which
        $G$ acts trivially by $M^{G}$.
        For any topological $G$-module $M$
        we write $R\Gamma(G, M)$ for the complex
        of continuous cochains of $G$ with coefficients in $M$.
        We denote the absolute Galois group of a field $F$ by $G_F$.
        For any noetherian ring $R$ we write $\mathcal{D}(R)$ for the
        derived category of (left) $R$-modules. We recall that a complex
        of $R$-modules is called \textit{perfect} if it is isomorphic
        in $\mathcal{D}(R)$ to a bounded complex of finitely generated
        projective $R$-modules.
        
        Let $K$ be a number field and let $\Sigma$ be a finite set of places
        of $K$ containing the set $S_{\infty}$ of archimedean places
        of $K$ and the set $S_p$ of all $p$-adic places of $K$.
        Let $L_\infty$ be a one-dimensional $p$-adic Lie extension of $K$ containing a $\ZZ_p$-extension $K_\infty$ of $K$. 
        We assume that the extension $L_{\infty}/K$ is unramified
        outside $\Sigma$, which can always be achieved by enlarging $\Sigma$
        if necessary. If $p = 2$, we in addition
        require $K$ to be totally imaginary.
        
        Let $M_{\Sigma}$ denote the maximal pro-$p$-extension
        of $L_{\infty}$ which is unramified outside $\Sigma$. We put 
        $G_{\Sigma} := \Gal(M_{\Sigma}/K)$ and $H_{\Sigma} := \Gal(M_{\Sigma}/L_{\infty})$.
        Since $K$ is totally imaginary
        if $p=2$, the cohomological $p$-dimension of $G_{\Sigma}$ equals
        $2$ in all cases by \cite[Proposition 10.11.3]{nsw}
        (note that our definition of $G_{\Sigma}$ follows
        \cite[Chapter XI, \S 3, p.739]{nsw}, but slightly
        differs from the profinite group $G_{\Sigma}$ considered
        in \cite[Chapter X, \S 11]{nsw}; however,
        the proof of \cite[Lemma 5.3]{jann} shows that both
        groups have the same cohomological $p$-dimension).

        Letting $H = \Gal(L_\infty/K_\infty)$ (which is finite), $\mathcal{G} = \Gal(L_\infty/K)$ and $\Gamma_K = \Gal(K_\infty/K) \simeq \ZZ_p$, it is well known that the short exact sequence $H \hookrightarrow \mathcal{G} \twoheadrightarrow \Gamma_K$ splits and one may choose an open central subgroup $\Gamma \simeq \ZZ_p$ of $\mathcal{G}$ (see for instance \cite[\S 1]{rwii}). We fix such a choice. 
        Then $L_\infty$ is a $\ZZ_p$-extension of 
        $L := L_\infty^\Gamma$ and every intermediate layer $L_n$ is a finite Galois extension of $K$. We denote the corresponding group by $\mathcal{G}_n = \Gal(L_n/K)$.

        
        For every place $v$ of $K$, fix a distinguished prolongation $v^c$ to the algebraic closure $K^c = \QQ^c$ of $K$.
        Given $F \subseteq K^c$, we denote by $v(F)$
        the place of $F$ below $v^c$ and, if $F$ is Galois over $K$,
        we denote the decomposition group at $v(F)$ simply by
        $\Gal(F/K)_v$. Different choices of prolongations lead to conjugate groups, which has no influence on any of the results presented.
        If $S$ is a finite set of places of $K$, we write $S(F)$ for the
        set comprising all places of $F$ above places in $S$.
        
        We will henceforth assume that every non-archimedean place of $K$ has open decomposition group in $\mathcal{G}$ and therefore splits into finitely many places in $L_\infty$. In addition, we will often,
        but not always, assume that the \textit{weak Leopoldt conjecture} holds for $L_\infty/L$, i.e.\ we assume that $H_2(H_{\Sigma}, \ZZ_p)$ vanishes.
        Note that both conditions are indeed satisfied if $K_{\infty}$
        is the cyclotomic $\ZZ_p$-extension of $K$ by
        \cite[Exercise 13.2]{wash} and \cite[Theorem 10.3.25]{nsw}, respectively.
       
    \subsection{Local and global complexes}
    \label{subsec:local_and_global_complexes}
        
        The main complex arises as the shifted cone of
        a morphism from local to global complexes of $\Lambda(\mathcal{G})$-modules, all of which result from an application of the so-called \textit{translation functor}  introduced by Gruenberg \cite{gruenberg} and expanded upon by Ritter and Weiss \cite{rw_tate_sequence}. We now review some of the details.
        
        Let $X_{\Sigma} = H_{\Sigma}^{ab}$ be maximal abelian quotient of $H_{\Sigma}$, i.e.\
        $X_{\Sigma}$ is the Galois group over $L_{\infty}$ of the 
        maximal abelian pro-$p$-extension of $L_{\infty}$ that is unramified
        outside $\Sigma$. This is a natural finitely generated
        $\Lambda(\mathcal{G})$-module.
        If we assume weak Leopoldt, it has no nontrivial finite
        $\Lambda(\Gamma)$-submodules and $\Lambda(\Gamma)$-rank $r_2(L)$,
        the number of complex places of $L$, by \cite[Theorem 10.3.22]{nsw}.
        Consider the short exact sequence of $\Lambda(G_{\Sigma})$-modules
        \begin{equation}
        \label{eq:augmentation_sequence_global}
            0 \to \Delta(G_{\Sigma}) \to \Lambda(G_{\Sigma}) \xrightarrow{\aug} \ZZ_p \to 0.
        \end{equation}
        Since $H_1(H_{\Sigma}, \Lambda(G_{\Sigma}))$ vanishes and
        $H_1(H_{\Sigma}, \ZZ_p)$ naturally identifies with
        $X_{\Sigma}$, taking $H_{\Sigma}$-coinvariants of sequence
        \eqref{eq:augmentation_sequence_global}
        yields a four-term exact sequence of $\Lambda(\mathcal{G})$-modules
        \begin{equation}
        \label{eq:complex_sequence_global}
            0 \to X_{\Sigma} \to \Delta(G_{\Sigma})_{H_{\Sigma}} \to \Lambda(\mathcal{G}) \to \ZZ_p \to 0.
        \end{equation}
         We set $Y_{\Sigma} := \Delta(G_{\Sigma})_{H_{\Sigma} }$. With this notation, we regard the middle arrow in sequence 
         \eqref{eq:complex_sequence_global} as a complex $\mathcal{T}_{\Sigma}^{\bullet} = [Y_{\Sigma} \to \Lambda(\mathcal{G})]$ of $\Lambda(\mathcal{G})$-modules concentrated in degrees $0$ and $1$. Then we have that $H^0(\mathcal{T}_{\Sigma}^{\bullet}) = X_{\Sigma}$ and $H^1(\mathcal{T}_{\Sigma}^{\bullet}) = \ZZ_p$.
         We will refer to $\mathcal{T}_{\Sigma}^{\bullet}$ as the 
         \textbf{global complex}.
        
        Turning now our attention to the local side, let $v$ first be a \textit{non-archimedean} place of $K$. We denote the completion
        of $K$ at $v$ by $K_v$ and regard the decomposition group $G_{L_{\infty},v}$ of $G_{L_{\infty}}$ at our distinguished prolongation $v^c$ as a subgroup of $G_{K_v}$.
        It is normal and the quotient identifies with
        the decomposition group $\mathcal{G}_v$.
        
        A construction analogous to the global case can now be carried out by taking {$G_{L_\infty, v}$-coinvariants} on the exact sequence
        \begin{equation}
        \label{eq:augmentation_sequence_local}
            0 \to \Delta(G_{K_v}) \to \Lambda(G_{K_v}) \to \ZZ_p \to 0,
        \end{equation}
        resulting in a four-term exact sequence of $\Lambda(\mathcal{G}_v)$-modules
        \begin{equation}
        \label{eq:complex_sequence_local}
            0 \to G_{L_{\infty, v}}^{ab}(p) \to \Delta(G_{K_v})_{G_{L_\infty, v}} \to \Lambda(\mathcal{G}_v) \to \ZZ_p \to 0.
        \end{equation}
        We denote the first term (the maximal abelian 
        pro-$p$-quotient of $G_{L_{\infty, v}}$) by $X_v$, 
        and the second one by $Y_v$.
        
        Finally, if $v$ is an \textit{archimedean} place of $K$, the same steps as above can be taken. However, due to its simplicity, we opt instead to give a more explicit description of the archimedean situation, where two cases can be distinguished:
        \begin{itemize}
            \item{
                If $v$ is \textit{unramified} (i.e. completely split) in $L_\infty/K$, then $\Delta(\mathcal{G}_v)$ is trivial and we regard the augmentation sequence associated to $\mathcal{G}_v$ as a four-term exact sequence
                \begin{equation} \label{eq:archimedean-unramified}
                    0 \to X_v \to Y_v \to \Lambda(\mathcal{G}_v) \to \ZZ_p \to 0
                \end{equation}
                with $X_v = Y_v = 0$. This is always the case if $p = 2$ by assumption.
            }
            \item{
                If $v$ is \textit{ramified} in $L_\infty/K$, the augmentation sequence associated to $\mathcal{G}_v$ takes the form
                \begin{equation} \label{eq:archimedean-ramified}
                     0 \to \Delta(\mathcal{G}_v) \to \Lambda(\mathcal{G}_v) \to \ZZ_p \to 0
                \end{equation}
                and splits as $e^- \ZZ_p \hookrightarrow e^- \ZZ_p \oplus e^+ \ZZ_p \twoheadrightarrow e^+ \ZZ_p$, where $e^+ = (1 + \tau_{w_\infty})/2$ and $e^- = 1 - e^+$ are the central idempotents constructed from the complex conjugation $\tau_{w_\infty}$ generating $\mathcal{G}_v$. Note that 2 is invertible in $\ZZ_p$ in this case as $p \neq 2$ by assumption. Sequence \eqref{eq:archimedean-ramified} can again be regarded as a sequence of the form \eqref{eq:archimedean-unramified} with $X_v = 0$ and $Y_v = \Delta(\mathcal{G}_v) = e^- \ZZ_p$.
            }
        \end{itemize}
        
        
        The upshot is that every place $v$ of $K$ has an associated four-term exact sequence with some possibly trivial terms. In all cases we regard the middle arrow as a complex $\mathcal{L}_v^{\bullet} = [Y_v \to \Lambda(\mathcal{G}_v)]$ of $\Lambda(\mathcal{G}_v)$-modules concentrated in degrees 0 and 1, with $H^0(\mathcal{L}_v^{\bullet}) = X_v$ and $H^1(\mathcal{L}_v^{\bullet}) = \ZZ_p$ as defined above. We refer to it as the \textbf{local complex} at $v$.

    \subsection{The main complex}
    \label{sec:the_main_complex}
        Let us now define a morphism from the local complexes to the global one. For a fixed $v$, consider the continuous group homomorphism
        \begin{equation}
        \label{eq:galois_map_local_to_global}
            G_{K_v} \rightarrow G_K \rightarrow G_{\Sigma}
        \end{equation}
        (where the first arrow is injective and the second is surjective,
        but the composite, in general, is neither -- cf.\ \cite[Chapters IX and X]{nsw}). It induces a homomorphism of $\Lambda(\mathcal{G}_v)$-modules
        \[
            \alpha_v^0 \colon \faktor{\Delta(G_{K_v})}{\Delta(G_{L_{\infty, v}}) \Delta(G_{K_v})} \to \faktor{\Delta(G_{\Sigma})}{\Delta(H_{\Sigma})\Delta(G_{\Sigma})},
        \]
        where the codomain is $Y_{\Sigma}$ by definition and the domain coincides with $Y_v$ by an immediate computation (whether $v$ is archimedean or not). We also define $\alpha_v^1 \colon \Lambda(\mathcal{G}_v) \hookrightarrow \Lambda(\mathcal{G})$ as the canonical embedding.
        
        The commutativity of
        \begin{center}
            \begin{tikzcd}
                Y_v \arrow[d, "\alpha_v^0"] \arrow[r] & \Lambda(\mathcal{G}_v) \arrow[d, "\alpha_v^1"] \\
                Y_{\Sigma} \arrow[r]                & \Lambda(\mathcal{G})
            \end{tikzcd}
        \end{center}
        implies that the map $\alpha_v \colon \mathcal{L}_v^{\bullet} \to \mathcal{T}_{\Sigma}^{\bullet}$ given by $\alpha_v^0$ and $\alpha_v^1$ in degrees 0 and 1, respectively, defines a morphism of $\Lambda(\mathcal{G}_v)$-complexes.
        
        Let $\Ind_{\mathcal{G}_v}^\mathcal{G}$ be the compact induction functor $\Lambda(\mathcal{G}) \ctp_{\Lambda(\mathcal{G}_v)} -$, and denote by $\Ind_{\mathcal{G}_v}^\mathcal{G} \mathcal{L}_v^{\bullet}$ the result of applying said functor to $\mathcal{L}_v^{\bullet}$ degree-wise. In fact, for the case at hand $\Ind_{\mathcal{G}_v}^\mathcal{G} \mathcal{L}_v^{\bullet}$ coincides with $\Lambda(\mathcal{G}) \ctp_{\Lambda(\mathcal{G}_v)}^\LL \mathcal{L}_v^{\bullet}$.
        If $v$ is archimedean, this follows from the particular shape
        of the sequences \eqref{eq:archimedean-unramified} and
        \eqref{eq:archimedean-ramified}. If $v$ is a finite place,
        the Iwasawa algebra $\Lambda(\mathcal{G})$ is free of finite rank
        over $\Lambda(\mathcal{G}_v)$ by our running assumptions so that
        the (compact) induction functor is exact.

        Since $\mathcal{T}_{\Sigma}^{\bullet}$ is a complex of $\Lambda(\mathcal{G})$-(and not only $\Lambda(\mathcal{G}_v)$-)modules, induction yields a morphism of complexes $\Ind_{\mathcal{G}_v}^\mathcal{G} \alpha_v \colon \Ind_{\mathcal{G}_v}^\mathcal{G} \mathcal{L}_v^{\bullet} \to \mathcal{T}_{\Sigma}^{\bullet}$ of $\Lambda(\mathcal{G})$-modules.
        Let $S \subseteq \Sigma$ be a subset of $\Sigma$ containing
        $S_{\infty}$ and all places that ramify in $L_{\infty}$.
        We put $T := \Sigma \setminus S$ so that $\Sigma = S \cup T$
        is the disjoint union of $S$ and $T$.
         Letting $\mathcal{L}_S^{\bullet} = \bigoplus_{v \in S} \Ind_{\mathcal{G}_v}^\mathcal{G} \mathcal{L}_v^{\bullet}$ and
        \[
            \alpha_{S, T} = \sum_{v \in S} \Ind_{\mathcal{G}_v}^\mathcal{G} \alpha_v \colon \mathcal{L}_S^{\bullet} \to \mathcal{T}_{S \cup T}^{\bullet},            
        \]
        we are now in a position to define the main complex.
                
        \begin{definition}
            The \textbf{main complex} is the complex of $\Lambda(\mathcal{G})$-modules
            \[
                \mathcal{C}_{S, T}^{\bullet} = \Cone(\mathcal{L}_S^{\bullet} \xrightarrow{\alpha_{S, T}} \mathcal{T}_{S \cup T}^{\bullet})[-1].
            \]
        \end{definition}
        
        \begin{lemma}
        \label{lem:perfect}
        	Using the above notation the following hold:
        	\begin{enumerate}
        		\item 
        		For any place $v$ of $K$, the local complex $\mathcal{L}_v^{\bullet}$ is a perfect complex of $\Lambda(\mathcal{G}_v)$-modules. 
        		\item 
        		If the weak Leopoldt
        		conjecture holds, both
        		the global complex $\mathcal{T}_{S \cup T}^{\bullet}$ and the main complex $\mathcal{C}_{S, T}^{\bullet}$ are perfect complexes of $\Lambda(\mathcal{G})$-modules. 
        	\end{enumerate}
        \end{lemma}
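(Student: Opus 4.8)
The reduction step is purely formal: a complex of the form $[A \xrightarrow{f} B]$ placed in degrees $0$ and $1$ is canonically $\Cone(A \xrightarrow{f} B)[-1]$, and since the perfect complexes form a triangulated subcategory of $\mathcal{D}(R)$ for any noetherian ring $R$, such a complex is perfect whenever both $A$ and $B$ are — and $B$ is perfect as soon as it is finitely generated projective. As $\mathcal{L}_v^\bullet = [Y_v \to \Lambda(\mathcal{G}_v)]$ and $\mathcal{T}_{S \cup T}^\bullet = [Y_\Sigma \to \Lambda(\mathcal{G})]$ have free second term, assertion (1) and the perfectness of $\mathcal{T}_{S\cup T}^\bullet$ in (2) amount to showing that $Y_v$, respectively $Y_\Sigma = \Delta(G_\Sigma)_{H_\Sigma}$, is a perfect module. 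For $v$ archimedean this is immediate from the explicit description: $Y_v$ is either $0$, or (in the ramified case, where $p$ is necessarily odd) the direct summand $e^- \ZZ_p$ of $\Lambda(\mathcal{G}_v) = \ZZ_p[\mathcal{G}_v]$; in either case $\mathcal{L}_v^\bullet$ is already a bounded complex of finitely generated projective modules.

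For the remaining cases, set $(\mathcal{H}, N) := (G_{K_v}, G_{L_\infty, v})$ when $v$ is non-archimedean and $(\mathcal{H}, N) := (G_\Sigma, H_\Sigma)$ in the global situation, so that $\bar{\mathcal{G}} := \mathcal{H}/N$ is $\mathcal{G}_v$, respectively $\mathcal{G}$, and $Y = \Delta(\mathcal{H})_N$ is $Y_v$, respectively $Y_\Sigma$. Two facts drive the argument. First, $\ZZ_p$ is a perfect $\Lambda(\mathcal{H})$-module: one has $\cd_p(\mathcal{H}) \le 2$ — in the global case this is the equality recalled in the setup, in the local case it is classical — and $\mathcal{H}$ is topologically finitely presented, so the augmentation $\Lambda(\mathcal{H}) \twoheadrightarrow \ZZ_p$ extends to a resolution of length $2$ by finitely generated projective $\Lambda(\mathcal{H})$-modules; applying the two-out-of-three property to the triangle coming from $0 \to \Delta(\mathcal{H}) \to \Lambda(\mathcal{H}) \to \ZZ_p \to 0$ shows that $\Delta(\mathcal{H})$ is perfect over $\Lambda(\mathcal{H})$ as well. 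Second, $\Delta(\mathcal{H})$ is acyclic for $N$-homology in positive degrees: since $\Lambda(\mathcal{H})$ is a pro-free, hence flat, $\Lambda(N)$-module, $H_i(N, \Lambda(\mathcal{H})) = 0$ for $i \ge 1$, and the homology sequence of the augmentation sequence yields $H_i(N, \Delta(\mathcal{H})) \cong H_{i+1}(N, \ZZ_p)$ for $i \ge 1$; the latter vanishes for $i \ge 2$ because $\cd_p(N) \le \cd_p(\mathcal{H}) \le 2$, and for $i = 1$ because $H_2(N, \ZZ_p)$ is Pontryagin dual to $H^2(N, \QQ_p/\ZZ_p)$. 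In the global case this last group vanishes by the weak Leopoldt hypothesis; in the local case it vanishes unconditionally, as $H^2(G_{L_\infty, v}, \QQ_p/\ZZ_p)$ is a direct limit of groups $H^2(-, \QQ_p/\ZZ_p)$ of non-archimedean local fields, each dual to the (finite) group of $p$-power roots of unity in that field, hence trivial.

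Combining these two facts, the underived coinvariants coincide with the derived coinvariants, i.e.\ $Y = \Delta(\mathcal{H})_N \simeq \Lambda(\bar{\mathcal{G}}) \ctp^{\LL}_{\Lambda(\mathcal{H})} \Delta(\mathcal{H})$ in $\mathcal{D}(\Lambda(\bar{\mathcal{G}}))$; as derived base change along $\Lambda(\mathcal{H}) \to \Lambda(\bar{\mathcal{G}})$ preserves perfect complexes, $Y$ is perfect over $\Lambda(\bar{\mathcal{G}})$, which completes (1) and establishes perfectness of $\mathcal{T}_{S \cup T}^\bullet$. Finally, for the main complex one uses that each $\Ind_{\mathcal{G}_v}^{\mathcal{G}} \mathcal{L}_v^\bullet = \Lambda(\mathcal{G}) \ctp^{\LL}_{\Lambda(\mathcal{G}_v)} \mathcal{L}_v^\bullet$ (by the identification recorded before the definition of $\mathcal{C}_{S, T}^\bullet$) is the derived base change of a perfect complex and hence perfect over $\Lambda(\mathcal{G})$; since $S$ is finite, $\mathcal{L}_S^\bullet = \bigoplus_{v \in S} \Ind_{\mathcal{G}_v}^{\mathcal{G}} \mathcal{L}_v^\bullet$ is perfect, and therefore so is the shifted cone $\mathcal{C}_{S, T}^\bullet = \Cone(\mathcal{L}_S^\bullet \xrightarrow{\alpha_{S, T}} \mathcal{T}_{S \cup T}^\bullet)[-1]$.

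The step I expect to require the most care is the pair of statements over the large, non-noetherian Iwasawa algebras $\Lambda(G_{K_v})$ and $\Lambda(G_\Sigma)$: that $\ZZ_p$ is a perfect module there — which hinges on these Galois groups being of type $FP_\infty$ over $\ZZ_p$, i.e.\ topologically finitely presented with finite $p$-cohomological dimension, so that the syzygies of $\ZZ_p$ remain finitely generated and the one in degree $2$ is projective — and the verification that $\Delta(\mathcal{H})$ is genuinely $N$-acyclic, so that its naive coinvariants compute its derived coinvariants. The rest is formal manipulation with triangles and with the base-change and cone operations on perfect complexes.
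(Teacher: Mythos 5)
Your proposal is correct in substance and follows the same skeleton as the paper's proof: reduce to perfectness of the modules $Y_v$ and $Y_\Sigma$, handle archimedean places by inspection, and obtain the main complex as a cone of perfect complexes. The difference is in the middle: the paper simply cites \cite[Proposition 5.6.7]{nsw}, which says that for $N \hookrightarrow G \twoheadrightarrow Q$ with $H_2(N,\ZZ_p)=0$, $G$ topologically finitely generated and $\cd_p G \le 2$, the module $Y_{G,N}$ has projective dimension at most $1$ over $\Lambda(Q)$ — and then verifies these three hypotheses exactly as you do. You instead reprove this via derived base change, which is essentially the proof of the cited proposition. Two remarks on your version. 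First, by routing through perfectness of $\ZZ_p$ over the non-noetherian algebra $\Lambda(\mathcal{H})$ you need $\mathcal{H}$ to be of type $FP_2$ (finite presentation plus finite generation of the degree-two syzygy), which you assert but do not justify; this holds here (finiteness of $H^2(G_\Sigma,\ZZ/p)$ and the local Euler characteristic formula), but it is strictly more than the paper needs: since $\Lambda(Q)$ \emph{is} noetherian, one may take an arbitrary length-two projective resolution of $\ZZ_p$ over $\Lambda(\mathcal{H})$ (no finite generation required), apply $(-)_N$ using your acyclicity computation to get $\mathrm{pd}_{\Lambda(Q)} Y \le 1$, and then invoke finite generation of $Y$ (topological finite generation of $\mathcal{H}$) at the very end. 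Second, your justification of the local vanishing of $H_2(G_{L_\infty,v},\ZZ_p)$ is garbled as written: $H^2(F,\QQ_p/\ZZ_p)$ for a local field $F$ is dual to $\varprojlim_n \mu_{p^n}(F)$, i.e.\ to the Tate module of $\mu_{p^\infty}(F)$, which vanishes \emph{because} $\mu_{p^\infty}(F)$ is finite — being ``dual to a finite group'' would not by itself force triviality. The conclusion and the remainder of the argument are fine.
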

        
        \begin{proof}
            For any short exact sequence of profinite groups $N \hookrightarrow G \twoheadrightarrow Q$ such that $H_2(N, \ZZ_p)$ vanishes, 
            and $G$ is topologically finitely generated and has cohomological $p$-dimension $\cd_p G \leq 2$, the 
            $\Lambda(Q)$-module $Y_{G, N} = \Delta(G)_N$ has projective dimension at most $1$ by \cite[Proposition 5.6.7]{nsw}.
            
            For (i) we consider the short exact sequence $G_{L_\infty, v} \hookrightarrow G_{K_v} \twoheadrightarrow \mathcal{G}_v$
            for a non-archimedean place $v$ of $K$. We apply 
            \cite[Theorems 7.4.1 and 7.1.8]{nsw} to deduce that $G_{K_v}$
            is topologically finitely generated and that
       		$\cd_p G_{K_v} = 2$, respectively. The vanishing of $H_2(G_{L_\infty, v}, \ZZ_p) \simeq H^2(G_{L_\infty, v}, \QQ_p/\ZZ_p)^\vee$ follows from 
            \cite[Propositions 1.5.1 and 7.3.10]{nsw}.
            Now the above observation implies that the local complex
            $\mathcal{L}_v^{\bullet}$ is perfect. The latter is clear
            by construction if $v$ is archimedean. This finishes the proof
            of (i).
            
            Now suppose that the weak Leopoldt conjecture holds and therefore $H_2(H_{\Sigma}, \ZZ_p)$ vanishes. 
            We apply the above argument to the sequence $H_{\Sigma} \hookrightarrow G_{\Sigma} \twoheadrightarrow \mathcal{G}$ in order to deduce the perfectness of the global complex 
            $\mathcal{T}_{\Sigma}^{\bullet} = \mathcal{T}_{S \cup T}^{\bullet}$.
            Note that (as explained in \cite[p.\ 623]{nsw}) \cite[Corollary 10.11.15]{nsw} implies 
            that the Galois group $G_{\Sigma}$ is indeed topologically finitely generated; and \cite[Proposition 10.11.3]{nsw} shows that
            one has $\cd_p G_{\Sigma} \leq 2$, as already mentioned.
            Since $\mathcal{L}_S^{\bullet} = \bigoplus_{v \in S} \Ind_{\mathcal{G}_v}^\mathcal{G} \mathcal{L}_v^{\bullet}$ is a perfect complex of $\Lambda(\mathcal{G})$-modules by (i), 
            so is the main complex $\mathcal{C}_{S, T}^{\bullet}$.
        \end{proof}
        
        In order to study the cohomology of the main complex, we recall the definitions of a few classical $\Lambda(\mathcal{G})$-modules:
        \begin{itemize}
            \item{For each layer $n$ we write $\mathcal{O}_{L_n,S}$ for the
            	ring of $S(L_n)$-integers in $L_n$ and let
            	$\mathcal{O}_{L_n, S, T}^{\times}$ be the subgroup of $\mathcal{O}_{L_n,S}^{\times}$
            	comprising those $S(L_n)$-units which are congruent to $1$ modulo each place in $T(L_n)$. Then
                $E_{S, T} := \varprojlim_n (\ZZ_p \otimes \mathcal{O}_{L_n, S, T}^{\times})$, where the inverse limit is taken with respect to the norm maps, is a finitely generated
                $\Lambda(\mathcal{G})$-module. We denote $E_{S, \emptyset}$ by $E_S$.
            }
            \item{We let
                $\mathcal{Y}_S = \bigoplus_{v \in S} \Ind_{\mathcal{G}_v}^\mathcal{G} \ZZ_p$, where $\ZZ_p$ is endowed with the trivial $\mathcal{G}_v$-action. This module can be identified with the inverse limit $\varprojlim_n \mathcal{Y}_{L_n, S}$ along the $\ZZ_p$-tower $L_\infty/L$ of the $\ZZ_{p}[\mathcal{G}_n]$-modules
                \begin{equation}
                    \mathcal{Y}_{L_n, S} = \bigoplus_{v \in S} \Ind_{(\mathcal{G}_n)_{v(L_n)}}^{\mathcal{G}_n} \ZZ_p \simeq \bigoplus_{w_n \in S(L_n)} \ZZ_p \cdot w_n,
                \end{equation}
	                where the last term is a free $\ZZ_p$-module with $\mathcal{G}_n$-action given by Galois conjugation of places. By construction, $\mathcal{Y}_S$ naturally identifies with $H^1(\mathcal{L}_S^{\bullet})$.
            }
            \item{ We let
                $\mathcal{X}_S$ be the kernel of the augmentation map
                $\mathcal{Y}_S \twoheadrightarrow \ZZ_p$,
                which can be identified with the 
                inverse limit of the 
                $\ZZ_{p}[\mathcal{G}_n]$-modules 
                $\mathcal{X}_{L_n, S} = \ker(\mathcal{Y}_{L_n, S} \twoheadrightarrow \ZZ_p)$.
            }
            \item{
            	We denote the Galois group over $L_\infty$ of its maximal $T$-ramified abelian pro-$p$-extension which is completely split at $S$ by $X_{T, S}^{cs}$.
            }
        \end{itemize}
               
        \begin{prop}
        \label{prop:cohomology_of_complex}
            The complex $\mathcal{C}_{S, T}^{\bullet}$ is acyclic outside degrees 0 and 1 and satisfies the following:
            \begin{enumerate}
                \item{There is a short exact sequence of 
                	$\Lambda(\mathcal{G})$-modules
                \[
                	0 \rightarrow H_2(H_S, \ZZ_p) \rightarrow
                	E_{S,T} \rightarrow H^0(\mathcal{C}_{S, T}^{\bullet})
                	\rightarrow 0.
                \]
                In particular, if the weak Leopoldt conjecture holds for $L_\infty/L$, then
                    ${H^0(\mathcal{C}_{S, T}^{\bullet}) \simeq E_{S, T}}$.
                }
                \item{
                    $H^1(\mathcal{C}_{S, T}^{\bullet})$ fits into a short exact sequence of $\Lambda(\mathcal{G})$-modules
                    \begin{equation}
                    \label{eq:ses_cohomology_one}
                        0 \to X_{T, S}^{cs} \to H^1(\mathcal{C}_{S, T}^{\bullet}) \to \mathcal{X}_S \to 0.
                    \end{equation}
                }
            \end{enumerate}
        \end{prop}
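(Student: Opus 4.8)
The plan is to deduce everything from the defining distinguished triangle
\[
\mathcal{C}_{S,T}^{\bullet} \longrightarrow \mathcal{L}_S^{\bullet} \xrightarrow{\ \alpha_{S,T}\ } \mathcal{T}_{S\cup T}^{\bullet} \longrightarrow \mathcal{C}_{S,T}^{\bullet}[1]
\]
together with the cohomology of the local and global complexes recorded in \S\ref{subsec:local_and_global_complexes}, namely $H^{0}(\mathcal{L}_S^{\bullet}) = \bigoplus_{v\in S}\Ind_{\mathcal{G}_v}^{\mathcal{G}} X_v$, $H^{1}(\mathcal{L}_S^{\bullet}) = \mathcal{Y}_S$, $H^{0}(\mathcal{T}_{S\cup T}^{\bullet}) = X_{\Sigma}$ and $H^{1}(\mathcal{T}_{S\cup T}^{\bullet}) = \ZZ_p$ (with $\Sigma = S\cup T$). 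Writing out the associated long exact sequence in cohomology, and noting that $H^{1}(\alpha_{S,T})\colon \mathcal{Y}_S \to \ZZ_p$ is, by inspection of the construction of the $\alpha_v$, the augmentation map used to define $\mathcal{X}_S$ -- hence surjective with kernel $\mathcal{X}_S$ because $S\neq\emptyset$ -- one obtains at once that $\mathcal{C}_{S,T}^{\bullet}$ is acyclic outside degrees $0$ and $1$, that $H^{0}(\mathcal{C}_{S,T}^{\bullet}) = \ker\bar{\alpha}^{0}$, and that there is a short exact sequence
\[
0 \longrightarrow \mathrm{coker}\,\bar{\alpha}^{0} \longrightarrow H^{1}(\mathcal{C}_{S,T}^{\bullet}) \longrightarrow \mathcal{X}_S \longrightarrow 0,
\]
where $\bar{\alpha}^{0} = H^{0}(\alpha_{S,T})\colon \bigoplus_{v\in S}\Ind_{\mathcal{G}_v}^{\mathcal{G}} X_v \to X_{\Sigma}$. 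So the whole proposition reduces to identifying the kernel and cokernel of $\bar{\alpha}^{0}$.

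Because $\alpha_v^{0}$ is induced by the map $G_{K_v}\to G_{K}\to G_{\Sigma}$ of \eqref{eq:galois_map_local_to_global}, the homomorphism $\bar{\alpha}^{0}$ is the sum of the canonical localisation maps: on the summand at $v$ it sends $X_v = G_{L_{\infty},v}^{\mathrm{ab}}(p)$ onto the decomposition subgroup of $X_{\Sigma}$ at the place of $L_{\infty}$ above $v$ cut out by our distinguished prolongation, and compact induction produces the decomposition subgroups at all the places of $L_{\infty}$ above $v$. Hence the image of $\bar{\alpha}^{0}$ is the closed $\Lambda(\mathcal{G})$-submodule of $X_{\Sigma}$ generated by all decomposition groups at places above $S$, so that $\mathrm{coker}\,\bar{\alpha}^{0}$ is the Galois group over $L_{\infty}$ of the maximal abelian pro-$p$-extension of $L_{\infty}$ that is unramified outside $\Sigma$ and completely split at $S$. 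As $S$ contains $S_{\infty}$ and every place ramifying in $L_{\infty}$, complete splitting at $S$ already forces unramifiedness there, so this extension coincides with the maximal $T$-ramified abelian pro-$p$-extension of $L_{\infty}$ completely split at $S$; thus $\mathrm{coker}\,\bar{\alpha}^{0} \simeq X_{T,S}^{cs}$, which proves (ii).

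For (i) I would appeal to the Iwasawa-theoretic form of the global duality (Poitou--Tate) exact sequence for the set $S$ and the tower $L_{\infty}/L$, in its $T$-modified shape: combining local class field theory (which identifies the local factors with the modules $X_v$) with global class field theory and a careful analysis of the limit over the layers $L_n$ yields an exact sequence of $\Lambda(\mathcal{G})$-modules
\[
0 \longrightarrow H_{2}(H_S,\ZZ_p) \longrightarrow E_{S,T} \longrightarrow \bigoplus_{v\in S}\Ind_{\mathcal{G}_v}^{\mathcal{G}} X_v \xrightarrow{\ \bar{\alpha}^{0}\ } X_{\Sigma} \longrightarrow X_{T,S}^{cs} \longrightarrow 0,
\]
in which the leftmost term $H_{2}(H_S,\ZZ_p) = H^{2}(H_S,\QQ_p/\ZZ_p)^{\vee}$ records the failure of weak Leopoldt for the $S$-ramified tower. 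Since $H^{0}(\mathcal{C}_{S,T}^{\bullet}) = \ker\bar{\alpha}^{0}$, the first four terms give precisely the short exact sequence of (i). Finally, if weak Leopoldt holds for $L_{\infty}/L$ then $H_{2}(H_{\Sigma},\ZZ_p)=0$, and hence $H_{2}(H_S,\ZZ_p)=0$ as well (the weak Leopoldt conjecture being independent of the auxiliary finite set of places, cf.\ \cite[Theorem 10.3.25]{nsw}), so that $H^{0}(\mathcal{C}_{S,T}^{\bullet}) \simeq E_{S,T}$.

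The crux of the argument is the five-term exact sequence used for (i): one must run global duality in tandem with the passage to the limit over the $L_n$, keeping careful track of the $T$-modification of the $(S,T)$-unit and ray-class groups and of the deliberate asymmetry between $S$ (over which the local complexes are formed) and $\Sigma = S\cup T$ (which governs the global complex), and then verify that the connecting map appearing there -- a priori described through localisation -- really is the map $\bar{\alpha}^{0}$ coming from the translation-functor construction, with the defect being exactly $H_{2}(H_S,\ZZ_p)$. The acyclicity assertion and part (ii) are, by contrast, essentially formal consequences of the long exact sequence attached to the triangle.
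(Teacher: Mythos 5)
Your treatment of the acyclicity statement and of part (ii) follows the paper's argument almost verbatim: the long exact sequence of the cone, the identification of $H^1(\alpha_{S,T})$ with the augmentation $\mathcal{Y}_S\twoheadrightarrow\ZZ_p$, and the identification of $\operatorname{coker}H^0(\alpha_{S,T})$ with $X_{T,S}^{cs}$ (the paper phrases the last step via local class field theory, writing $H^0(\mathcal{L}_S^{\bullet})$ as the limit $A_S$ of completed local unit groups and quoting that $A_S\to X_{S\cup T}$ has cokernel $X_{T,S}^{cs}$; your Galois-theoretic description of the image as the submodule generated by decomposition groups is the same computation). That much is correct and complete.

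For part (i), however, you have deferred exactly the step that constitutes the actual content of the paper's proof. The six-term sequence
\[
0 \to H_{2}(H_S,\ZZ_p) \to E_{S,T} \to \bigoplus_{v\in S}\Ind_{\mathcal{G}_v}^{\mathcal{G}} X_v \to X_{S\cup T} \to X_{T,S}^{cs} \to 0
\]
is \emph{not} available off the shelf: the reference \cite[Theorem 11.3.10(i)]{nsw} only gives the unmodified case, namely $0\to H_2(H_S,\ZZ_p)\to E_S\to A_S\to X_S\to X_{\emptyset,S}^{cs}\to 0$, with no $T$ anywhere. Note in particular the asymmetry you yourself flag: the middle terms of your sequence involve $X_{S\cup T}$ and $X_{T,S}^{cs}$, while the unit term is the $T$-modified $E_{S,T}$ and the kernel term is still $H_2(H_S,\ZZ_p)$. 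The paper bridges this by a concrete diagram chase: it compares the $T=\emptyset$ sequence with the target sequence using the exact column $\bigoplus_{v\in T^p}\Ind_{\mathcal{G}_v}^{\mathcal{G}}\ZZ_p(1)\hookrightarrow X_{S\cup T}\twoheadrightarrow X_S$ from \cite[Theorem 11.3.5]{nsw}, together with the identification of $E_{S,T}$ as the kernel of $E_S\to\bigoplus_{v\in T^p}\Ind_{\mathcal{G}_v}^{\mathcal{G}}\ZZ_p(1)$ obtained by taking $p$-completions and limits of $0\to\mathcal{O}_{L_n,S,T}^{\times}\to\mathcal{O}_{L_n,S}^{\times}\to\bigoplus_{w_n\in T(L_n)}\kappa(w_n)^{\times}$. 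Your phrase ``a careful analysis of the limit over the layers $L_n$'' is a placeholder for precisely this argument, so as written the proof of (i) is incomplete; to repair it you should either carry out that diagram chase or cite a source that states the $T$-modified sequence in the Iwasawa-theoretic limit. (A smaller point: the final deduction $H_2(H_S,\ZZ_p)=0$ from weak Leopoldt for $\Sigma$ also deserves a word of justification when $S$ need not contain all of $S_p$; the paper handles this via the comparison with the BKS complex in the subsequent remark.)
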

        
        \begin{proof}
            By definition we have a long exact sequence in cohomology
            \[
            	0 \rightarrow H^{0}(\mathcal{C}_{S,T}^{\bullet})
            	\rightarrow H^{0}(\mathcal{L}_S^{\bullet})
            	\rightarrow H^{0}(\mathcal{T}_{S \cup T}^{\bullet}) 
            	\rightarrow
            \]
            \[
            	\rightarrow H^{1}(\mathcal{C}_{S,T}^{\bullet})
            	\rightarrow H^{1}(\mathcal{L}_S^{\bullet})
            	\rightarrow H^{1}(\mathcal{T}_{S \cup T}^{\bullet})
            	\rightarrow H^{2}(\mathcal{C}_{S,T}^{\bullet}) 
            	\rightarrow 0 \dots
            \]
            which immediately implies that $\mathcal{C}_{S,T}^{\bullet}$
            is acyclic outside degrees $0$, $1$ and $2$.
            The map $H^1(\alpha_{S,T}) \colon H^{1}(\mathcal{L}_S^{\bullet})
            \rightarrow H^{1}(\mathcal{T}_{S \cup T}^{\bullet})$ identifies with
            the augmentation map $\mathcal{Y}_S \rightarrow \ZZ_p$,
            which is surjective as $S$ is non-empty. Hence
            $H^{2}(\mathcal{C}_{S,T}^{\bullet})$ vanishes and the kernel
            of $H^1(\alpha_{S,T})$ identifies with $\mathcal{X}_S$.
            Moreover, by local class field theory there is an isomorphism
            \[
            	H^0(\mathcal{L}_S^{\bullet}) \simeq A_S :=
            	\varprojlim_n\bigoplus_{w_n \in S_f(L_n)} A_{w_n}(L_n), \quad
            	A_{w_n}(L_n) := \varprojlim_m (L_n)_{w_n}^{\times} / ((L_n)_{w_n}^{\times})^{p^m},
            \]
            where $S_f \subseteq S$ denotes the set of finite places contained in $S$. 
            Therefore,  the map $H^0(\alpha_{S,T})$ identifies with the natural map
            $A_S \rightarrow X_{S \cup T}$, which has cokernel $X_{T, S}^{cs}$. This finishes the proof of (ii).
			We now claim that we have a commutative diagram of $\Lambda(\mathcal{G})$-modules with exact rows:
            \begin{center}
            	\begin{tikzcd}
            		& & & \bigoplus_{v \in T^p} \Ind_{\mathcal{G}_v}^\mathcal{G} \ZZ_p(1) \arrow[d, hook] & \\
            		 H_2(H_S, \ZZ_p) \arrow[r, hook] \arrow[d, equal] & E_{S,T} \arrow[d, hook] \arrow[r] & A_S \arrow[d, equal] \arrow[r]  & X_{S \cup T} \arrow[d, twoheadrightarrow] \arrow[r, twoheadrightarrow] & X_{T,S}^{cs} \arrow[d, twoheadrightarrow] \\
            		H_2(H_S, \ZZ_p) \arrow[r, hook] & E_{S} \arrow[r] & A_S \arrow[r] & X_{S} \arrow[r, twoheadrightarrow] & X_{\emptyset,S}^{cs} 
            	\end{tikzcd}
            \end{center}
            The bottom sequence is \cite[Theorem 11.3.10(i)]{nsw}, which already
            shows part (i) if $T$ is empty. The set  
            $T^p$ consists of all places $v \in T$ such that 
            $L_w$ contains a primitive $p$-th root of unity for any
            (and hence every) place $w$ of $L$ above $v$. The vertical
            sequence is exact by \cite[Theorem 11.3.5]{nsw}.
            It now suffices to show that an $x \in E_{S}$ has trivial image
            in $X_{S \cup T}$ under the composite map $E_S \rightarrow A_S \rightarrow X_{S \cup T}$
            if and only if $x \in E_{S,T}$; then we are
            done by a diagram chase.
            
            For this consider the left exact sequences
            \[
            	0 \rightarrow \mathcal{O}_{L_n,S,T}^{\times} \rightarrow
            	\mathcal{O}_{L_n,S}^{\times} \rightarrow 
            	\bigoplus_{w_n \in T(L_n)} \kappa(w_n)^{\times},
            \]
            where $\kappa(w_n)$ denotes the residue field of $L_n$
            at $w_n$.
            Taking $p$-completions and the limit along the $\ZZ_p$-tower
            yields an exact sequence
            \[
            	0 \rightarrow E_{S,T} \rightarrow E_S \rightarrow \bigoplus_{v \in T^p} \Ind_{\mathcal{G}_v}^\mathcal{G} \ZZ_p(1),
            \]
            which implies our claim.
        \end{proof}
        
        \begin{remark}
        	If we assume the weak Leopoldt conjecture, then
        	Proposition \ref{prop:cohomology_of_complex} 
        	can also be deduced from Theorem \ref{thm:iso_bks_complex} below. In that particular case, $H_2(H_S, \ZZ_p)$ vanishes and the short exact sequence in point (i) above becomes an isomorphism. 
        \end{remark}

    \section{The complex of Burns, Kurihara and Sano}
    
        In this section we assume that the weak Leopoldt conjecture holds.
        We first recall the complex  employed by Burns, Kurihara and Sano (the ``BKS complex'' in the sequel) in \cite[pp. 1534 and 1539-1540]{bks}. Then we prove that this complex and our complex 
        $\mathcal{C}_{S, T}^{\bullet}$ indeed coincide in the derived category
        of $\Lambda(\mathcal{G})$-modules in all cases where the two settings overlap.
        The differences between the settings are as follows.   
        
           \begin{itemize}
            \item{
                Here $L_\infty/K$ is only assumed to be Galois, whereas it is required to be abelian in \cite{bks}. The definition of the complex therein, however, does not require this hypothesis.
                So we will not impose this condition.
            }
            \item{
                Our condition \textit{$p \neq 2$ if $K$ is not totally imaginary} does not appear in the earlier part of \cite{bks} -- neither in the definition of the complex nor in the formulation of their Main Conjecture. The condition $p \neq 2$ is indeed required in their reformulation of that conjecture \cite[Conjecture 3.14]{bks}.}
        \end{itemize}
        
        We start by briefly recalling the construction of the BKS complex, which is essentially given by an inverse limit of finite-level complexes from \cite{burns_flach}.

        \begin{definition}
        \label{def:complexes_bks}
            \begin{enumerate}
                \item{
                    Let $p$ be a prime number, $F/K$ a finite Galois extension of the number field $K$
                    and $S$ and $T$ two disjoint finite sets of places of $K$ such that $S$ contains $S_\infty \cup S_{\ram}(F/K)$. We set
                    \[
                        \mathcal{B}_{F, S, \emptyset}^{\bullet} = R \Hom_{\ZZ_p}(R \Gamma_c(H_{F, S}, \ZZ_p), \ZZ_p)[-2]
                        \in \mathcal{D}(\ZZ_{p}[\Gal(F/K)]),
                    \]
                    where $H_{F, S}$ denotes the Galois group over $F$ of its maximal $S$-ramified extension and $R \Gamma_c$ denotes the compactly supported cohomology complex (cf. \cite[Equation (3)]{burns_flach}). We then define $\mathcal{B}_{F, S, T}^{\bullet} \in \mathcal{D}(\ZZ_{p}[\Gal(F/K)])$ by the exact triangle
                    \begin{equation}
                    \label{eq:exact_triangle_bks}
                        \mathcal{B}_{F, S, T}^{\bullet} \to \mathcal{B}_{F, S, \emptyset}^{\bullet} \to \bigoplus_{w \in T(F)} \ZZ_p \otimes \kappa(w)^{\times}[0] \to,
                    \end{equation}
                    where the second arrow is as in \cite{bks}.
                }
                \item{
                    For an arbitrary Galois extension $F_\infty/K$, $S \supseteq S_\infty \cup S_{\ram}(F_\infty/K)$ and $T$ disjoint from $S$, we define
                    \[
                        \mathcal{B}_{F_\infty, S, T}^{\bullet} = \varprojlim_F \mathcal{B}_{F, S, T}^{\bullet} \in
                        \mathcal{D}(\Lambda(\Gal(F_{\infty}/K))),
                    \]
                    where $F$ runs over the intermediate finite Galois extensions of $K$, and
                    the limit is taken with respect to the natural transition maps induced by restriction. 
                }
            \end{enumerate}
        \end{definition}

%
    With this notation and that of the previous section, the BKS complex is simply $\mathcal{B}_{L_\infty, S, T}^{\bullet}$, which we abbreviate to $\mathcal{B}_{S, T}^{\bullet}$.
    
	Our next task is to relate these complexes to those of the form considered in the previous section. This is achieved by the following proposition, the idea of whose proof is largely taken from \cite[Theorem 2.4]{nickel_plms}
	(see also \cite[\S 3]{venjakob_on_rw}).
	Since this is crucial for the main result of the article, we repeat
	the argument for convenience of the reader. Indeed we give a slight
	generalisation.

        \begin{prop}
        \label{prop:iso_rgamma_complexes}
            Let $G$ be a profinite group and $N \trianglelefteq_c G$ a closed normal subgroup such that $H_n(N, \ZZ_p) = 0$ for all $n \geq 2$, and set $Q = G/N$. Then there exists an isomorphism of complexes
            \[
                R\Hom(R\Gamma(N, \QQ_p/\ZZ_p), \QQ_p/\ZZ_p) \simeq\ [\stackrel{-1}{Y_{G, N}} \xrightarrow{v} \stackrel{0}{\Lambda(Q)}]
            \]
	            in the derived category $\mathcal{D}(\Lambda(Q))$, where $Y_{G, N} = \Delta(G)_N = \Delta(G)/\Delta(N)\Delta(G)$ and $\upsilon$ is induced by the natural $\ZZ_{p}$-module homomorphism $\Delta(G) \rightarrow \Lambda(Q)$,
	            $g - 1 \mapsto g \pmod N -1$.
        \end{prop}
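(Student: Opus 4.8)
The plan is to realise both sides of the claimed isomorphism as invariants of the augmentation sequence for $G$, pushed down to $Q$. Start from the short exact sequence of $\Lambda(G)$-modules
\[
0 \to \Delta(G) \to \Lambda(G) \xrightarrow{\aug} \ZZ_p \to 0,
\]
regarded as a complex $P^{\bullet} = [\Delta(G) \to \Lambda(G)]$ in degrees $-1$ and $0$; since $\Lambda(G)$ is flat over $\ZZ_p$ and the quotient is $\ZZ_p$, this complex represents $R\Gamma(G,\ZZ_p)$ shifted appropriately, or more precisely computes the homology of $G$. Applying the (derived) coinvariants functor $(-)_N = \ZZ_p \ctp_{\Lambda(N)} -$ to $P^{\bullet}$: because $\Lambda(G)$ is free over $\Lambda(N)$, we have $R(\Lambda(G))_N = \Lambda(Q)$ concentrated in degree $0$, and the derived coinvariants of $\Delta(G)$ are computed by the same complex. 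The hypothesis $H_n(N,\ZZ_p)=0$ for all $n\ge 2$ is exactly what forces $\Delta(G)$ to have $N$-homology concentrated in degrees $0$ and $1$, with $H_0 = Y_{G,N} = \Delta(G)_N$ and $H_1 = H_2(N,\ZZ_p)=0$; hence $R(\Delta(G))_N \simeq Y_{G,N}[0]$. Therefore $R(P^{\bullet})_N$ is quasi-isomorphic to the two-term complex $[Y_{G,N} \xrightarrow{\upsilon} \Lambda(Q)]$ in degrees $-1,0$, where the differential is induced by $\Delta(G)\to\Lambda(G)\to\Lambda(Q)$, i.e.\ $g-1\mapsto \bar g - 1$ — precisely the map $\upsilon$ in the statement.

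It remains to identify the left-hand side $R\Hom(R\Gamma(N,\QQ_p/\ZZ_p),\QQ_p/\ZZ_p)$ with $R(P^{\bullet})_N$ (up to the appropriate shift and the passage between homology of $N$ and coinvariants over $N$). The key point is Pontryagin duality together with the standard identification of continuous cochain cohomology of $N$ with coefficients in the discrete module $\QQ_p/\ZZ_p$ as the Pontryagin dual of the homology of $N$ with $\ZZ_p$-coefficients: $H^i(N,\QQ_p/\ZZ_p) \simeq H_i(N,\ZZ_p)^\vee$ for $i\ge 1$ (and $H^0 = \ZZ_p^\vee = \QQ_p/\ZZ_p$, which needs to be handled by keeping track of the $\Lambda(Q)$-module structure). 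Dualising back, $R\Hom(R\Gamma(N,\QQ_p/\ZZ_p),\QQ_p/\ZZ_p)$ becomes a complex computing $H_i(N,\ZZ_p)$ in each degree; the augmentation complex $P^{\bullet}$ provides an explicit length-two projective (indeed free) resolution-type model over $\Lambda(G)$ whose $N$-coinvariants compute exactly these homology groups, and one checks the transition maps match. Concretely, I would write down the comparison as a morphism of complexes $R(P^{\bullet})_N \to R\Hom(R\Gamma(N,\QQ_p/\ZZ_p),\QQ_p/\ZZ_p)$ (or its inverse) and verify it is a quasi-isomorphism degree by degree using the vanishing hypothesis, exactly as in the cited argument of \cite[Theorem 2.4]{nickel_plms}.

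The main obstacle — and the place where the ``slight generalisation'' over \cite{nickel_plms} lives — is bookkeeping at the level of $\Lambda(Q)$-module structures and the passage between $R\Gamma$ (cochains) and $R\Gamma_c$/homology: the naive statement only sees cohomology groups as abelian groups, but one needs the differential $\upsilon$ to be the specific $\Lambda(Q)$-linear map $\bar g - 1 \mapsto \cdots$, so one must track the $Q$-action through the duality and through the coinvariants functor. A related subtlety is that $N$ need not have finite cohomological dimension, so the hypothesis is phrased as vanishing of $H_n(N,\ZZ_p)$ for \emph{all} $n\ge 2$ rather than a bound on $\cd_p N$; this is precisely what is needed to conclude that $R(\Delta(G))_N$ is a module (in degree $0$) and not a longer complex. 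Once that vanishing is in hand, everything else is a formal manipulation with the two-term augmentation complex and Pontryagin duality, and no further input (such as weak Leopoldt) is required at this level of generality — it will be invoked later only to guarantee the hypothesis $H_2(N,\ZZ_p)=0$ in the arithmetic applications with $N = H_\Sigma$.
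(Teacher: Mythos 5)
Your proposal follows essentially the same route as the paper: take the augmentation sequence $\Delta(G) \hookrightarrow \Lambda(G) \twoheadrightarrow \ZZ_p$, apply derived $N$-coinvariants, use $H_n(N,\ZZ_p)=0$ for $n \ge 2$ (together with the $N$-acyclicity of $\Lambda(G)$) to collapse both terms of the resulting triangle to degree $0$, and identify the left-hand side with $\ZZ_p \ctp^{\LL}_{\Lambda(N)} \ZZ_p$ via Pontryagin duality. The one step you leave as a plan — producing an actual comparison morphism of complexes rather than just matching cohomology groups degree by degree — is handled in the paper by the functorial identity $\Hom_{\Lambda(N)}(M, \QQ_p/\ZZ_p)^\vee \simeq M \ctp_{\Lambda(N)} \ZZ_p$ applied to a projective resolution of $\ZZ_p$, which yields the derived-category isomorphism \eqref{eq:iso_rg_complex_tensor} directly.
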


        \begin{proof}
            Since the left exact functors $(-)^N$ and $\Hom_{\Lambda(N)}(\ZZ_p, -)$ coincide, the complexes $R\Gamma(N, \QQ_p/\ZZ_p)$ and $R\Hom_{\Lambda(N)}(\ZZ_p, \QQ_p/\ZZ_p)$ are naturally isomorphic
            in $\mathcal{D}(\Lambda(Q))$.

            The contravariant functor $\Hom_{\ZZ_p}(-, \QQ_p/\ZZ_p) = (-)^\vee$
            is exact. Noting that $M \ctp_{\Lambda(N)} \ZZ_p \simeq \Hom_{\Lambda(N)}(M, \QQ_p/\ZZ_p)^\vee$ for any compact right $\Lambda(N)$-module $M$ (see the proof of \cite[Corollary 5.2.9]{nsw}), we conclude that
            \begin{equation}
            \label{eq:iso_rg_complex_tensor}
                    R\Hom(R\Gamma(N, \QQ_p/\ZZ_p), \QQ_p/\ZZ_p) \simeq\ \ZZ_p \ctp_{\Lambda(N)}^\LL \ZZ_p
            \end{equation}
            in $\mathcal{D}(\Lambda(Q))$ (where $- \ctp_{\Lambda(N)}^\LL \ZZ_p$ denotes the left derived functor of $- \ctp_{\Lambda(N)} \ZZ_p$) by dualising a projective resolution of $\ZZ_p$ into an injective one of $\QQ_p/\ZZ_p$.

            Now we consider the canonical short exact sequence $\Delta(G) \hookrightarrow \Lambda(G) \twoheadrightarrow \ZZ_p$ of compact $\Lambda(N)$-modules, which induces an exact triangle
            \[
                \Delta(G) \ctp_{\Lambda(N)}^\LL \ZZ_p \to \Lambda(G) \ctp_{\Lambda(N)}^\LL \ZZ_p \to \ZZ_p \ctp_{\Lambda(N)}^\LL \ZZ_p \to
            \]
            in $\mathcal{D}(\Lambda(Q))$ and hence an isomorphism
            \begin{equation}
            \label{eq:zp_derived_cone}
                \ZZ_p \ctp_{\Lambda(N)}^\LL \ZZ_p \simeq\ \Cone(\Delta(G) \ctp_{\Lambda(N)}^\LL \ZZ_p \to \Lambda(G) \ctp_{\Lambda(N)}^\LL \ZZ_p).
            \end{equation}

            The groups $H_i(N, \Lambda(G))$ vanish for all $i \geq 1$, since $\Lambda(G)^\vee \simeq \Map_{cts}(G, \QQ_p/\ZZ_p)$ is $G$-induced and therefore cohomologically trivial. In particular, there are isomorphisms $H_{i + 1}(N, \ZZ_p) \simeq H_i(N, \Delta(G))$ for all $i \geq 1$. This implies $H_i(N, \Delta(G)) = 0$ for all $i \geq 1$ by hypothesis, and hence the complexes $\Delta(G) \ctp_{\Lambda(N)}^\LL \ZZ_p$ and $\Lambda(G) \ctp_{\Lambda(N)}^\LL \ZZ_p$ are both acyclic outside degree 0. But a complex with a single non-trivial cohomology group is isomorphic in the derived category to the complex consisting of that group concentrated in the corresponding degree, i.e.
            \[
                \Delta(G) \ctp_{\Lambda(N)}^\LL \ZZ_p \simeq (\Delta(G) \ctp_{\Lambda(N)} \ZZ_p)[0] = Y_{G, N}[0]
            \]
            and
            \[
                \quad \Lambda(G) \ctp_{\Lambda(N)}^\LL \ZZ_p \simeq (\Lambda(G) \ctp_{\Lambda(N)} \ZZ_p)[0] = \Lambda(Q)[0].
            \]
            This concludes the proof by \eqref{eq:iso_rg_complex_tensor} and \eqref{eq:zp_derived_cone}, since by definition of the cone we have
            \[
                \Cone(Y_{G, N}[0] \to \Lambda(Q)[0]) = [\stackrel{-1}{Y_{G, N}} \xrightarrow{v} \stackrel{0}{\Lambda(Q)}]
            \]
            and the morphisms defining both cones are compatible.
        \end{proof}

        \begin{remark}
        \label{rem:complex_independent_of_g_q}
            It follows that, as a complex of abelian groups, $[Y_{G, N} \to \Lambda(Q)]$ is independent of $G$ and $Q$ up to isomorphism in the derived category -- as long as they fit into a short exact sequence $N \hookrightarrow G \twoheadrightarrow Q$. This should not be too surprising, as by the argument given in \S \ref{subsec:local_and_global_complexes} 
            the cohomology groups of complexes of that form can be shown to be $N^{ab}(p)$, $\ZZ_p$ and $\lbrace 1 \rbrace$.
        \end{remark}

        The above proposition can now be applied to the global and local complexes to obtain isomorphisms
        \begin{equation}
        \label{eq:iso_global_rhom}
            \mathcal{T}_{\Sigma}^{\bullet}\simeq\ R\Hom(R\Gamma(H_{\Sigma}, \QQ_p/\ZZ_p), \QQ_p/\ZZ_p)[-1]
        \end{equation}
        and
        \[
            \mathcal{L}_v^{\bullet} \simeq\ R\Hom(R\Gamma(G_{L_\infty, v}, \QQ_p/\ZZ_p), \QQ_p/\ZZ_p)[-1],
        \]
        where $G_{L_\infty, v}$ is as in \S \ref{subsec:local_and_global_complexes}. Note that the hypotheses
        of the proposition are indeed satisfied. In the global case, this follows from the weak Leopoldt conjecture and $\cd_p G_{\Sigma} \leq 2$ (see the proof of lemma \ref{lem:perfect}); in that of a non-archimedean place $v$, from loc.\ cit.; and in that of an archimedean place $v$, from classical homology of finite groups: if $G_{L_\infty, v}$ is trivial, then its homology in degree $\geq 1$ vanishes for any module; and otherwise, $\lvert G_{L_\infty, v} \rvert = 2$ (so $p$ is odd by assumption) and hence the groups $H_n(G_{L_\infty, v}, \ZZ_p)$ are 2-torsion $\ZZ_p$-modules, i.e. trivial, for all $n \geq 1$.

        In combination with some duality properties, this description of the global and local complexes allows us to prove the main result of this article:

        \begin{theorem}
        \label{thm:iso_bks_complex}
        	Assume the weak Leopoldt conjecture. Then
            there is an isomorphism of complexes $\mathcal{C}_{S, T}^{\bullet} \simeq \mathcal{B}_{S, T}^{\bullet}$ in the derived category $\mathcal{D}(\Lambda(\mathcal{G}))$.
        \end{theorem}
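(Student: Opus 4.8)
The plan is to identify both complexes with the shifted cone of a single morphism of compactly‑supported-type cohomology complexes, using Proposition~\ref{prop:iso_rgamma_complexes} to handle the local and global pieces and then matching the gluing data.

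\medskip

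First I would set up the global side. By \eqref{eq:iso_global_rhom} we have $\mathcal{T}_{S\cup T}^{\bullet}\simeq R\Hom(R\Gamma(H_{\Sigma},\QQ_p/\ZZ_p),\QQ_p/\ZZ_p)[-1]$. The key is to recognise the right-hand side, via the Poitou–Tate duality theorem (in the form of \cite[Proposition 5.6.7]{nsw} combined with \cite[Chapter VIII]{nsw} and passage to the limit over the tower $L_n$), as the compactly supported cohomology complex $R\Gamma_c(H_{L_\infty,S},\ZZ_p)^\vee[-2]$ that underlies the definition of $\mathcal{B}^{\bullet}_{L_\infty,S,\emptyset}$ in Definition~\ref{def:complexes_bks}(i). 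More precisely, one has for each finite layer $L_n$ a distinguished triangle relating $R\Gamma_c(H_{L_n,S},\ZZ_p)$, $R\Gamma(H_{L_n,S},\ZZ_p)$ and the sum of local contributions $\bigoplus_{v\in S}R\Gamma(K_{n,v},\ZZ_p)$; dualising and taking the inverse limit over $n$ turns the global term into $\mathcal{T}^{\bullet}_{S\cup T}$ (up to shift) and the local terms into the $\Ind_{\mathcal{G}_v}^{\mathcal{G}}\mathcal{L}_v^{\bullet}$, by the local incarnation of Proposition~\ref{prop:iso_rgamma_complexes} together with Shapiro's lemma. The condition $p\neq 2$ when $K$ is not totally imaginary guarantees the archimedean local terms are acyclic, so $S$ may be replaced by $S_f$ throughout, exactly as in the proof of Proposition~\ref{prop:cohomology_of_complex}.

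\medskip

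Second, I would check that the connecting map in this triangle is, under the identifications above, precisely $\alpha_{S,T}$ restricted to $T=\emptyset$: the map $G_{K_v}\to G_K\to G_\Sigma$ of \eqref{eq:galois_map_local_to_global} is exactly what induces the localisation map $R\Gamma(H_{L_\infty,S},\QQ_p/\ZZ_p)\to\bigoplus_{v\in S}R\Gamma(G_{L_\infty,v},\QQ_p/\ZZ_p)$ appearing in the Poitou–Tate triangle, and naturality of the construction in Proposition~\ref{prop:iso_rgamma_complexes} (specifically, compatibility of the maps $\upsilon$ on both sides, as recorded at the end of its proof) identifies the induced morphism on the two-term complexes with $\sum_v\Ind_{\mathcal{G}_v}^{\mathcal{G}}\alpha_v$. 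Taking cones and shifting by $[-1]$ then gives $\mathcal{C}^{\bullet}_{S,\emptyset}\simeq\mathcal{B}^{\bullet}_{S,\emptyset}$ in $\mathcal{D}(\Lambda(\mathcal{G}))$.

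\medskip

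Finally, I would pass from $T=\emptyset$ to general $T$. Both sides are defined by essentially the same recipe: $\mathcal{B}^{\bullet}_{S,T}$ sits in the triangle \eqref{eq:exact_triangle_bks} over $\mathcal{B}^{\bullet}_{S,\emptyset}$ with third term $\bigoplus_{w\in T(F)}\ZZ_p\otimes\kappa(w)^\times[0]$ (in the limit, $\bigoplus_{v\in T^p}\Ind_{\mathcal{G}_v}^{\mathcal{G}}\ZZ_p(1)$), while on our side the extra places in $T$ enter only through the global term $\mathcal{T}^{\bullet}_{S\cup T}$ versus $\mathcal{T}^{\bullet}_{S}$; the vertical exact sequence of \cite[Theorem 11.3.5]{nsw} used in the proof of Proposition~\ref{prop:cohomology_of_complex} provides exactly the comparison triangle relating $\mathcal{T}^{\bullet}_{S\cup T}$ to $\mathcal{T}^{\bullet}_{S}$ with the same third term. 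Since the construction of $\mathcal{C}^{\bullet}_{S,T}$ is functorial in these triangles and the identification in the previous step is compatible with the transition maps defining the relevant localisation morphisms, one obtains $\mathcal{C}^{\bullet}_{S,T}\simeq\mathcal{B}^{\bullet}_{S,T}$ by applying the octahedral axiom to the two nested cones.

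\medskip

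I expect the main obstacle to be the second step: pinning down that the connecting homomorphism arising from Poitou–Tate duality at finite level, after dualising, inverse-limiting, and applying the (derived) coinvariants/induction identifications of Proposition~\ref{prop:iso_rgamma_complexes}, really agrees on the nose with the explicit Galois-theoretic map $\alpha_{S,T}$ of \S\ref{sec:the_main_complex} — and that this agreement is natural enough to survive passage to the limit and to be compatible across the $T=\emptyset$ versus general $T$ comparison. This is the point where one must carefully unwind the ``the second arrow is as in \cite{bks}'' clause and the ``the morphisms defining both cones are compatible'' clause from Proposition~\ref{prop:iso_rgamma_complexes}, and it is essentially the content that \cite[Theorem 2.4]{nickel_plms} and \cite[\S 3]{venjakob_on_rw} handle in the totally real setting; the generalisation here is that $X_{\Sigma}$ need not be torsion, but the derived-category argument is insensitive to that.
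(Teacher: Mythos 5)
Your overall strategy is the one the paper follows: descend to finite level, identify the cone of the local-to-global morphism with a compactly supported cohomology complex via Poitou--Tate duality in derived form, and then compare the two $T$-modified exact triangles. Your second and third steps match the paper's argument essentially verbatim (the paper cites \cite[Theorem 4.2.6]{lim} for the derived Poitou--Tate statement and compares the triangles with common third term $\bigoplus_{v\in T^p}\Ind_{\mathcal{G}_v}^{\mathcal{G}}\ZZ_p(1)[0]$, exactly as you propose).

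There is, however, a genuine gap in your first step. What the Poitou--Tate cone construction produces is $R\Hom\bigl(R\Gamma_c(H_{L_n,S},\QQ_p/\ZZ_p),\QQ_p/\ZZ_p\bigr)$, the Pontryagin dual of compactly supported cohomology with \emph{divisible} coefficients, whereas $\mathcal{B}^{\bullet}_{F,S,\emptyset}$ is by Definition \ref{def:complexes_bks} the derived $\ZZ_p$-\emph{linear} dual $R\Hom_{\ZZ_p}\bigl(R\Gamma_c(H_{F,S},\ZZ_p),\ZZ_p\bigr)[-2]$ of the complex with $\ZZ_p$-coefficients. Your expression ``$R\Gamma_c(H_{L_\infty,S},\ZZ_p)^\vee[-2]$'' conflates these two objects, and bridging them is a substantive step, not a notational one. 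The paper does it by first applying global Tate duality to rewrite the Pontryagin-dual complex as $R\Gamma(H_{L_n,S},\ZZ_p(1))[1]$ and then invoking Nekov\'a\v{r} duality in the form of \cite[Theorem, p.~623]{lim_sharifi}, with $T=\ZZ_p(1)[0]$ and dualising complex $\ZZ_p[0]$, to obtain \eqref{eq:rgamma_finite_level}; only after these two further dualities does one land on $\mathcal{B}^{\bullet}_{S,\emptyset}$. Without them your identification of $\mathcal{C}^{\bullet}_{S,\emptyset}$ with $\mathcal{B}^{\bullet}_{S,\emptyset}$ does not go through. Two smaller inaccuracies: the archimedean local complexes are not acyclic (a ramified infinite place has $H^1(\mathcal{L}_v^{\bullet})=\ZZ_p$ and $Y_v=e^-\ZZ_p$), so one cannot simply replace $S$ by $S_f$; and \cite[Proposition 5.6.7]{nsw} is the projective-dimension statement used in Lemma \ref{lem:perfect}, not a form of Poitou--Tate duality.
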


        \begin{proof}
            It is convenient to initially disregard the set $T$ altogether and consider
            \[
                \mathcal{C}_{S, \emptyset}^{\bullet} = \Cone(\mathcal{L}_S^{\bullet} \xrightarrow{\alpha_{S, \emptyset}} \mathcal{T}_S^{\bullet})[-1].
            \]

            Let us consider the local side first. There are isomorphisms
            \begin{align*}
                \mathcal{L}_S^{\bullet}  = \bigoplus_{v \in S} \Ind_{\mathcal{G}_v}^\mathcal{G} \mathcal{L}_v^{\bullet} & \simeq \bigoplus_{v \in S} \Ind_{\mathcal{G}_v}^\mathcal{G} R\Hom(R\Gamma(G_{L_\infty, v}, \QQ_p/\ZZ_p), \QQ_p/\ZZ_p)[-1] \\
                & \simeq \bigoplus_{v \in S} \Ind_{\mathcal{G}_v}^\mathcal{G} R\Hom(\varinjlim_n R\Gamma(G_{L_n, v}, \QQ_p/\ZZ_p), \QQ_p/\ZZ_p)[-1] \\
                & \simeq \varprojlim_n \bigoplus_{v \in S} \Ind_{\mathcal{G}_{n, v}}^{\mathcal{G}_n} R\Hom(R\Gamma(G_{L_n, v}, \QQ_p/\ZZ_p), \QQ_p/\ZZ_p)[-1] \\
                & \simeq \varprojlim_n  R\Hom\big(\bigoplus_{v \in S} \Ind_{\mathcal{G}_{n, v}}^{\mathcal{G}_n} R\Gamma(G_{L_n, v}, \QQ_p/\ZZ_p), \QQ_p/\ZZ_p\big)[-1],
            \end{align*}
            in $\mathcal{D}(\Lambda(\mathcal{G}))$. 
            Here the first isomorphism follows from Proposition \ref{prop:iso_rgamma_complexes}. 
            The second isomorphism is \cite[Proposition 1.5.1]{nsw}. Note that $G_{L_\infty, v} = \bigcap_n G_{L_n, v}$ and this intersection is simply an inverse limit with respect to the canonical embeddings. Even though the cited result only refers to cohomology groups, its proof shows the isomorphism exists on the level of complexes.
            The last two isomorphisms are formal: $R\Hom$ takes colimits (in particular direct limits and sums) in the first component to limits (in particular inverse limits and products) and, in the cases above, $\Ind_{\mathcal{G}_{n, v}}^{\mathcal{G}_n} -$ commutes with $\varprojlim_n$ in the derived category. To see this, note that if $v$ is a non-archimedean place, then $\Ind_{\mathcal{G}_v}^\mathcal{G} -$ applied to a complex amounts to a finite sum of complexes isomorphic to it; and if $v$ is archimedean, then the inverse limit coincides with the complex at the layer $L_0 = L$ (such places do not split in the cyclotomic tower).

            By the same token, the global complex is isomorphic to
            \[
                \mathcal{T}_S^{\bullet} \simeq\ R\Hom(R\Gamma(H_S, \QQ_p/\ZZ_p), \QQ_p/\ZZ_p)[-1] \simeq\ \varprojlim_n R\Hom(R\Gamma(H_{L_n, S}, \QQ_p/\ZZ_p), \QQ_p/\ZZ_p)[-1],
            \]
            where $H_S = \Gal(M_S/L_\infty)$ and $H_{L_n, S} = \Gal(M_S/L_n)$. Recall that $M_S$ was defined as the the maximal $S$-ramified pro-$p$-extension of $L_{\infty}$ -- and hence of $L_n$ for all $n$.

            It follows that $\mathcal{C}_{S, \emptyset}^{\bullet}[2]$ is isomorphic to the cone of a morphism
            \[
                \bigg(\varprojlim_n R\Hom\big(\bigoplus_{v \in S} \Ind_{\mathcal{G}_{n, v}}^{\mathcal{G}_n} R\Gamma(G_{L_n, v}, \QQ_p/\ZZ_p), \QQ_p/\ZZ_p\big)\bigg) \xrightarrow{\alpha} \bigg(\varprojlim_n R\Hom(R\Gamma(H_{L_n, S}, \QQ_p/\ZZ_p), \QQ_p/\ZZ_p)\bigg)
            \]
            and therefore to the inverse limit of the cones of the finite-level morphisms
            \[
                R\Hom\big(\bigoplus_{v \in S} \Ind_{\mathcal{G}_{n, v}}^{\mathcal{G}_n} R\Gamma(G_{L_n, v}, \QQ_p/\ZZ_p), \QQ_p/\ZZ_p\big) \xrightarrow{\alpha_n} R\Hom(R\Gamma(H_{L_n, S}, \QQ_p/\ZZ_p), \QQ_p/\ZZ_p)
            \]
            because cones are defined in terms of direct sums and shifts, both of which commute with inverse limits. The map $\alpha$ is induced by the map $\alpha_{S, T}$ from section \ref{sec:the_main_complex} (with $T = \emptyset$) by definition and hence comes from the inflation and restriction associated to $G_{L_\infty} \twoheadrightarrow H_S$ and $G_{L_\infty, v} \hookrightarrow G_{L_\infty}$ respectively, as these are (after changing base field from $L_\infty$ to $K$) the homomorphisms used in the definition of the local-to-global maps. Note that $R\Hom(-, \QQ_p/\ZZ_p)$ causes a reversal in the direction of the resulting arrows. It follows that $\alpha_n$ is induced from $G_{L_n} \twoheadrightarrow H_{L_n, S}$ and $G_{L_n, v} \hookrightarrow G_{L_n}$.

            The definition of compact-support cohomology as the shifted cone of global-to-local restriction will yield the desired isomorphism by virtue of local and global Tate duality in their derived-categorical form. Namely, \cite[Theorem 4.2.6]{lim} (right column of the diagram therein) shows that the cone of $\alpha_n$ above is isomorphic in $\mathcal{D}(\ZZ_p[\mathcal{G}_n])$ to $R\Hom(R\Gamma_c(H_{L_n, S}, \QQ_p/\ZZ_p), \QQ_p/\ZZ_p)$. We therefore have isomorphisms
            \[
                \mathcal{C}_{S, \emptyset}^{\bullet} \simeq \varprojlim_n R\Hom(R\Gamma_c(H_{L_n, S}, \QQ_p/\ZZ_p), \QQ_p/\ZZ_p)[-2] \simeq \varprojlim_n R\Gamma(H_{L_n, S}, \ZZ_p(1))[1],
            \]
            in $\mathcal{D}(\Lambda(\mathcal{G}))$,
            the last one being global Tate duality (bottom row of the diagram in the cited theorem). Lastly, we apply Nekovář duality. Although introduced in \cite{nekovar}, a closer formulation to our setting is given by Lim and Sharifi in \cite[Theorem, p.\ 623]{lim_sharifi}. 
            We choose $R = \ZZ_p$, dualising complex $\omega_R = \ZZ_p[0]$ and complex $T = \ZZ_p(1)[0]$. 
            It is then easy to see that $R\Hom_{\ZZ_p}(T, \omega_R)$ is represented by the Kummer dual $T^{\ast} = \ZZ_p(-1)[0]$ of $T$, 
            and thus the aforementioned theorem implies that
            \begin{equation}
            \label{eq:rgamma_finite_level}
                R\Gamma(H_{L_n, S}, \ZZ_p(1)) \simeq\ R\Hom_{\ZZ_p}(R\Gamma_c(H_{L_n, S}, \ZZ_p), \ZZ_p)[-3]
            \end{equation}
            in $\mathcal{D}(\ZZ_p[\mathcal{G}_n])$ for all $n \geq 0$ and hence
            \begin{equation}
            \label{eq:iso_inverse_zp(1)}
                \mathcal{C}_{S, \emptyset}^{\bullet} \simeq \varprojlim_n R\Gamma(H_{L_n, S}, \ZZ_p(1))[1] \simeq \varprojlim_n R\Hom_{\ZZ_p}(R\Gamma_c(H_{L_n, S}, \ZZ_p), \ZZ_p)[-2] = \mathcal{B}_{S, \emptyset}^{\bullet}
            \end{equation}
			in $\mathcal{D}(\Lambda(\mathcal{G}))$.
            It remains to bring $T$ into the discussion. The difference between $\mathcal{C}_{S, T}^{\bullet}$ and $\mathcal{C}_{S, \emptyset}^{\bullet}$ is given by the following exact triangle in $\mathcal{D}(\Lambda(\mathcal{G}))$:
            \[
                \mathcal{C}_{S, T}^{\bullet} \to \mathcal{C}_{S, \emptyset}^{\bullet} \to \bigoplus_{v \in T^p} \Ind_{\mathcal{G}_v}^\mathcal{G} \ZZ_p(1)[0] \to,
            \]
            where as before $T^p$ denotes set of all places $v \in T$ such that 
            $L_w$ contains a primitive $p$-th root of unity for any
            (and hence every) place $w$ of $L$ above $v$.
            This follows from the definition of the complex and
            \cite[Theorem 11.3.5]{nsw}.

            As to the complex of Burns, Kurihara and Sano, the limit of \eqref{eq:exact_triangle_bks} along the cyclotomic tower yields an exact triangle
            \[
                \mathcal{B}_{S, T}^{\bullet} \to \mathcal{B}_{S, \emptyset}^{\bullet} \to \varprojlim_n \bigoplus_{w_n \in T(L_n)} \ZZ_p \otimes \kappa(w_n)^{\times}[0] \to.
            \]
            In the last term, the limit is taken with respect to the norm maps (cf. \cite[p.\ 1540]{bks}) and  is precisely $\bigoplus_{v \in T^p} \Ind_{\mathcal{G}_v}^\mathcal{G} \ZZ_p(1)[0]$. The result now follows by comparing the last two exact triangles and noting that the isomorphisms between their second and third terms are compatible because they come from the natural dualities and transition maps.
        \end{proof}
 
    \nocite*
    \bibliography{references}{}
    \bibliographystyle{amsalpha}

\end{document}